\newtheorem{thm}{Theorem}[section]
\newtheorem{cor}[thm]{Corollary}
\newtheorem{lem}[thm]{Lemma}
\theoremstyle{definition}
\newtheorem{defn}[thm]{Definition}
\newtheorem{remark}[thm]{Remark}
\theoremstyle{question}
\newtheorem{que}[thm]{Question}
\theoremstyle{Conjecture}
\theoremstyle{Problem}
\numberwithin{equation}{section}
\newcommand{\Real}{\mathbb N}
\begin{document}

\title[ an affirmative answer]{ an affirmative answer to the  Jaikin-Zapirain's question}%
\author{M. Zarrin}%

\address{Department of Mathematics, University of Kurdistan, P.O. Box: 416, Sanandaj, Iran}%
 \email{ M.zarrin@uok.ac.ir}
% ----------------------------------------------------------------
\begin{abstract}
If $X$ is a non-empty subset of a finite group  $G$, we denote by $o(x)$ the order of $x$ in $G$.  Then we put
$$o(X)=\frac{\sum_{x \in X} o(x)}{\mid X \mid}.$$
The number $o(X)$ is called the average order of $X$. Zapirain in 2011 \cite{zapir}, posed the following question:\\
Let $G$ be a finite {\rm (}$p$-{\rm)} group and $N$ a normal {\rm(}abelian{\rm }) subgroup of $G$. Is it true that $o(G)\geq o(N)^{1/2}$ {\rm ?}
Here, we will improve his question and confirm it.\\

{\bf Keywords}.
The number of conjugacy classes, average order. \\
%{\bf Mathematics Subject Classification (2010)}. ....
\end{abstract}
\maketitle
% ----------------------------------------------------------------

\section{\textbf{ Introduction}}

If $X$ is a non-empty subset of a finite group  $G$, we denote by $o(x)$ the order of $x$ in $G$.  Then we put
$$o(X)=\frac{\sum_{x \in X} o(x)}{\mid X \mid}=\frac{\psi(X)}{\mid X \mid}$$
The number $o(X)$ is called the average order of $X$.  
Since there is an interesting relation between the number of conjugacy classes of a finite group, say $k(G)$ with  its average order, Zapirain in 2011 \cite{zapir}, considered the average order for some powerful $p$-group of exponent $p^t$ and posed the following question:

\begin{que}\label{q}
Let $G$ be a finite {\rm (}$p$-{\rm)} group and $N$ a normal {\rm(}abelian{\rm }) subgroup of $G$. Is it true that $o(G)\geq o(N)^{1/2}$ {\rm ?}
\end{que}

 In this paper, we will improve his question and prove it. In fact, we will show that  for every finite group $G$ and  its special subset (including its subgroups), that it will be called $CC$-subsets, his question is true. 
  
 \begin{defn}\label{q}
A non-empty subset $C$ of a group  $G$ is called $CC$-subset (co-prime power closed), if $a\in C$ and $(n, o(a))=1$ then $a^n\in C$.   
\end{defn}

Our main result is the following.
\begin{thm}
Let $G$ be a finite  group and $A$ a $CC$-subset of $G$. Then $o(G)\geq o(A)^{1/2}$.
\end{thm}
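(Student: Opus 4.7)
The plan is to reformulate the theorem in terms of cyclic subgroups of $G$ and reduce to proving the key inequality $o(G)^2 \ge m$, where $m := \max_{g \in G} o(g)$.

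The first step is a structural observation: any $CC$-subset $A$ of $G$ decomposes as a disjoint union
\[
A = \bigsqcup_{C \in \mathcal{A}} \mathrm{Gen}(C),
\]
where $\mathcal{A}$ is a family of cyclic subgroups of $G$ and $\mathrm{Gen}(C)$ denotes the set of generators of $C$. Indeed, for any $a \in A$ the $CC$-property forces $\{a^n : \gcd(n, o(a)) = 1\} = \mathrm{Gen}(\langle a \rangle) \subseteq A$. Since $|\mathrm{Gen}(C)| = \phi(|C|)$ and every generator of $C$ has order $|C|$, one obtains
\[
o(A) = \frac{\sum_{C \in \mathcal{A}} |C|\,\phi(|C|)}{\sum_{C \in \mathcal{A}} \phi(|C|)},
\]
a weighted average of the numbers $|C|$, so $o(A) \le m$. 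Conversely, taking $A$ to be the set of generators of any cyclic subgroup of order $m$ gives a $CC$-subset with $o(A) = m$; hence the theorem is equivalent to $o(G)^2 \ge m$ for every finite group $G$.

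The key arithmetic input is
\[
\psi(d) := \sum_{e \mid d} e\,\phi(e) \ge d^{3/2} \qquad (d \ge 1).
\]
This is multiplicative in $d$, so it reduces via the prime-power formula $\psi(p^k) = (p^{2k+1}+1)/(p+1)$ to an elementary check on each prime power. Equivalently, $o(C) \ge \sqrt{|C|}$ for any cyclic group $C$. When $G$ is itself cyclic of order $m$, this already yields $o(G) = \psi(m)/m \ge \sqrt m$.

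The remaining case---and the main obstacle---is non-cyclic $G$. Fix $a \in G$ with $o(a) = m$ and set $N := \langle a \rangle$; by Frobenius's theorem $m \mid |G|$. The crude bounds $\psi(G) \ge \psi(N) = \psi(m)$ and $\psi(G) \ge |G|$ only combine to $\psi(G)^2 \ge \psi(m)\,|G|$, which falls short of the target $\psi(G)^2 \ge m|G|^2$ as soon as $|G| > m$. My plan is to exploit the full cyclic-subgroup decomposition
\[
\psi(G) = \sum_{C} |C|\,\phi(|C|), \qquad |G| = \sum_{C} \phi(|C|),
\]
and to show that the surplus $\psi(m) - m^{3/2}$ contributed by the cyclic subgroups inside $N$ absorbs the deficit created by cyclic subgroups of $G$ of order less than $\sqrt m$. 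The extremal configuration---where all elements outside $N$ are involutions, as in $G = D_{2m}$---reduces to the sharpened arithmetic estimate $\psi(m) \ge 2m^{3/2} - 2m$, again derivable from the prime-power formula. The main technical difficulty is to carry out this balancing uniformly over all finite groups, most likely via induction on $|G|$ or a carefully chosen Cauchy--Schwarz inequality applied to the cyclic-subgroup sum.
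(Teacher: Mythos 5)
Your reduction is correct and, if anything, cleaner than the paper's. The decomposition of a $CC$-subset into generator sets of cyclic subgroups is exactly the paper's partition of $A$ into classes of the relation $g \sim g^n$ for $(n,o(g))=1$; but where the paper runs an induction on the number of classes, repeatedly discarding the class of minimal order via a monotonicity lemma (its Lemma 2.1), your one-line weighted-average observation $o(A)\le m$ with $m = meo(G)$ does the same job immediately, and your converse (the generators of a cyclic subgroup of order $m$ form a $CC$-subset with $o(A)=m$) correctly shows the theorem is \emph{equivalent} to $o(G)^2\ge m$. Your cyclic case is also sound: $\psi(d)=\sum_{e\mid d}e\phi(e)$ is multiplicative, the prime-power value $(p^{2k+1}+1)/(p+1)\ge p^{3k/2}$ is an elementary check, and this matches the paper's bound in that special case.

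The genuine gap is that the equivalent inequality $o(G)\ge meo(G)^{1/2}$ for general non-cyclic $G$ --- which is the entire content of the paper's Lemma 2.2 and hence the heart of the theorem --- is not proved in your proposal, only deferred to ``induction on $|G|$ or a carefully chosen Cauchy--Schwarz.'' Moreover, the balancing scheme you sketch cannot work as stated: the target is $\sum_{g\in G}\bigl(o(g)-\sqrt{m}\bigr)\ge 0$, and the surplus available from the cyclic subgroups inside a single $N=\langle a\rangle$ is $\psi(m)-m\sqrt{m}<m^2$, bounded in terms of $m$ alone, whereas if all elements outside $N$ were involutions the deficit would be $(|G|-m)(\sqrt{m}-2)$, unbounded as $|G|/m^{3/2}$ grows. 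So a correct proof must supply a group-theoretic reason that large-order elements proliferate across many cosets or conjugates (as they visibly do in, say, $D_{2m}\times C_2^n$); a ledger confined to one cyclic subgroup can never close. The paper takes a different, global route: it identifies $\psi(G)$ with the cardinality of $E(G)=\{(a,b)\in G\times G : b\in\langle a\rangle\}$ and claims an injection of $G\times G\times\langle a\rangle$ into $E(G)\times E(G)$, which yields $\psi(G)^2\ge |G|^2\, meo(G)$ at once. To be fair, the paper leaves that injection unconstructed, and since the existence of such an injection between finite sets is equivalent to the inequality itself, the paper's own Lemma 2.2 is also not fully rigorous as written; but your proposal, unlike the paper, does not even reach a claimed mechanism for the crucial step, and the specific mechanism you do propose fails quantitatively.
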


\section{\textbf{ The Proof}}

For proof the main theorem, we need to define   equivalence relation $\frak{R_1}$  on $G$ as below:

$$ \forall ~~ g,h \in G  \quad g~\frak{R_1}~ h \hspace{3mm} if ~and~only ~if \hspace{3mm} \exists ~n \in \Real ~~such ~~that~~(n,o(g))=1 ~~and ~~ h=g^n .$$

\begin{lem}\label{l4}
Suppose that  $\bar x_i$ are some of the equivalence classes  with respect to the relation $\frak{R_1}$ on $G$ such that $o(x_t)\leq o(x_i)$ for 
$1\leq i \leq t$, then  $$o(\bar x_1 \cup \bar x_2 \cup \dots \cup \bar x_{t-1})\geq o(\bar x_1 \cup \bar x_2 \cup \bar x_3 \cup \dots \cup \bar x_t).$$
\end{lem}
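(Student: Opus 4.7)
\medskip

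\noindent\textbf{Proof plan.} The first step is to unpack what an equivalence class $\bar x$ under $\mathfrak{R_1}$ looks like. I would observe two elementary facts: (i) if $h=g^n$ with $(n,o(g))=1$ then $o(h)=o(g)$, so every element of $\bar x$ has the same order, which I will write $m(\bar x)$; and (ii) the distinct powers $x^n$ with $(n,o(x))=1$ are in bijection with the units of $\mathbb Z/o(x)\mathbb Z$, so $|\bar x|=\varphi(o(x))=\varphi(m(\bar x))$. Consequently $\psi(\bar x)=m(\bar x)\,\varphi(m(\bar x))$ and the average order of $\bar x$ itself equals $m(\bar x)$.

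\medskip

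\noindent Now set $m_i := m(\bar x_i)$, so by hypothesis $m_t\le m_i$ for all $1\le i\le t$. Since the classes $\bar x_1,\dots,\bar x_t$ are pairwise disjoint, I can write
\[
o\bigl(\bar x_1\cup\cdots\cup\bar x_{t-1}\bigr)=\frac{B}{A},\qquad o\bigl(\bar x_1\cup\cdots\cup\bar x_{t}\bigr)=\frac{B+m_t\varphi(m_t)}{A+\varphi(m_t)},
\]
where $A=\sum_{i=1}^{t-1}\varphi(m_i)$ and $B=\sum_{i=1}^{t-1}m_i\varphi(m_i)$. The desired inequality $B/A\ge (B+m_t\varphi(m_t))/(A+\varphi(m_t))$ is, after cross-multiplication and cancellation (using $\varphi(m_t)>0$), equivalent to
\[
B\ \ge\ m_t\,A,\qquad\text{i.e.,}\qquad \sum_{i=1}^{t-1}(m_i-m_t)\,\varphi(m_i)\ \ge\ 0.
\]

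\medskip

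\noindent This last inequality is immediate from the hypothesis $m_i\ge m_t$. So the proof reduces, after the structural observation about equivalence classes, to a one-line algebraic manipulation: appending a class of minimal average order to a union can only drag the average down.

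\medskip

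\noindent The only substantive point is really the first step, namely verifying that the $\mathfrak{R_1}$-classes are ``uniform'' in the sense that every member has order $m(\bar x)$ and that the cardinality is $\varphi(m(\bar x))$; once that is in place there is no genuine obstacle, because the weighted-mean inequality needed at the end is the standard fact that removing the smallest-valued block from a weighted average does not decrease it.
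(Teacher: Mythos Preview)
Your proof is correct and follows essentially the same route as the paper: you identify $|\bar x_i|=\varphi(o(x_i))$ and $\psi(\bar x_i)=o(x_i)\varphi(o(x_i))$, set up the same two quotients (your $B/A$ is the paper's $\alpha/\beta$), and reduce the comparison to the mediant-type inequality. In fact you supply more detail than the paper, which leaves the final inequality as ``not hard to see'' and never makes the structural facts about $\bar x$ explicit.
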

\begin{proof}
Put $\alpha=\sum_{i=1}^{t-1} \phi(o(x_i))o(x_i)$ and $\beta=\sum_{i=1}^{t-1}  \phi(o(x_i))$, where $\phi(n)$ is the Euler's totient function.  It follows that 
$$o(\bar x_1 \cup \bar x_2 \cup \dots \cup \bar x_{t-1})=\frac{\sum_{i=1}^{t-1} \psi(\bar x_i)}{\sum_{i=1}^{t-1} |\bar x_i|}=\frac{\sum_{i=1}^{t-1} \phi(o(x_i))o(x_i)}{\sum_{i=1}^{t-1} \phi(o(x_i))}=\frac{\alpha}{\beta}$$  and $o(\bar x_1 \cup \bar x_2 \cup \dots \cup \bar x_{t})=\frac{\alpha+\phi(o(x_t))o(x_t)}{\beta+\phi(o(x_t))}$. Now it is not hard to see that  $\frac{\alpha}{\beta}\geq \frac{\alpha+\phi(o(x_t))o(x_t)}{\beta+\phi(o(x_t))}$ and the result follows. 
 \end{proof}

The following result would be independently interesting.  

\begin{lem}\label{l3}
If $G$ is finite group and $x\in G$, then  $o(G)\geq o(x)^{1/2}$. In particular,   $o(G)\geq meo(G)^{1/2}$, where $meo(G)$ is the maximum order of an element of $G$. 
\end{lem}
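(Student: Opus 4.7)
My plan is in three stages. First, I would reduce to the case where $o(x)=\mathrm{meo}(G)=:n$, since proving the inequality for an element of maximum order implies it for an arbitrary $x$.

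Second, I would settle the cyclic subcase $G=\langle x\rangle\cong C_n$. Here $o(C_n)=f(n)/n$, where $f(n):=\sum_{d\mid n}d\,\phi(d)$. Since $f$ is multiplicative (as a Dirichlet convolution of the multiplicative functions $\mathbf 1$ and $\mathrm{id}\cdot\phi$), and for a prime power $f(p^k)=(p^{2k+1}+1)/(p+1)$, a short polynomial inequality yields $f(p^k)\ge p^{3k/2}$. Multiplicativity then gives $f(n)\ge n^{3/2}$, i.e.\ $o(C_n)\ge\sqrt n$, which handles the case $G=\langle x\rangle$.

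Third, for arbitrary $G$ I would decompose $G$ into its $\mathfrak R_1$-equivalence classes. The classes inside $\langle x\rangle$ are indexed by the divisors $d\mid n$ and contribute $f(n)$ to $\psi(G)$ and $n$ to $|G|$. Writing $A:=\sum\phi(o(y))$ and $B:=\sum o(y)\phi(o(y))$ summed over classes $\bar y\not\subseteq\langle x\rangle$,
\[
o(G)-\sqrt n \;=\; \frac{\bigl(f(n)-n\sqrt n\bigr)\;+\;\bigl(B-\sqrt n\,A\bigr)}{n+A}.
\]
The first summand is non-negative by the cyclic case. For the second, I would apply Lemma~\ref{l4} iteratively to the outside classes, peeling off the class of minimum order at each stage; this only raises the running average, reducing matters eventually to a situation where every remaining outside class has order at least $\sqrt n$, at which point the sign is automatic.

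The main obstacle is that Lemma~\ref{l4} compares averages rather than signed sums, so bridging the peeling with the signed inequality $\bigl(f(n)-n\sqrt n\bigr)+\bigl(B-\sqrt n\,A\bigr)\ge 0$ is the delicate point. I expect this to proceed via a pairwise check: for each single class $\bar y$ outside $\langle x\rangle$, one verifies $o(\bar x\cup\bar y)\ge\sqrt n$ using the elementary inequality $\phi(n)(n-\sqrt n)\ge\phi(o(y))(\sqrt n-o(y))$, and Lemma~\ref{l4} is then reapplied to glue these pairwise estimates over all outside classes simultaneously, using the excess $f(n)-n^{3/2}\ge 0$ from $\langle x\rangle$ as a reservoir to absorb any residual deficit.
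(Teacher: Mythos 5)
Your stages 1 and 2 are sound: the reduction to $o(x)=meo(G)=n$ is immediate, and the cyclic computation (with $f(n)=\sum_{d\mid n}d\,\phi(d)$ multiplicative, $f(p^k)=(p^{2k+1}+1)/(p+1)\geq p^{3k/2}$, hence $f(n)\geq n^{3/2}$) is correct and complete. The gap is in stage 3, and it is structural. First, Lemma \ref{l4} points the wrong way for your purpose: deleting the class of minimum order can only \emph{raise} the average of what remains, so peeling small outside classes gives $o(U)\leq o(U')$ for the peeled union $U'$; learning that $U'$ has average at least $\sqrt{n}$ says nothing about the signed sum $B-\sqrt{n}\,A$, which runs over \emph{all} outside classes, including the ones you peeled away. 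Second, and decisively, your reservoir is quantitatively too small. Take $G=C_{p^3}\times C_p^{\,k}$ with $x$ of order $n=p^3$, so $\sqrt{n}=p^{3/2}$. There are $p^{k+1}-p$ elements of order $p$ outside $\langle x\rangle$, each of order $p<p^{3/2}$, so the total deficit $\sum\phi(o(y))(\sqrt{n}-o(y))$ over these classes is $(p^{k+1}-p)(p^{3/2}-p)\sim p^{k+5/2}$, unbounded in $k$, while your entire reservoir $(f(n)-n^{3/2})+\phi(n)(n-\sqrt{n})$ is $O(p^6)$, independent of $k$. Indeed each pairwise union satisfies $o(\bar x\cup\bar y)=p(p^4+1)/(p^2+1)\geq\sqrt{n}$, yet the union of $\bar x$ with all the order-$p$ classes has average order $(p^5+mp)/(p^2+m)\to p<\sqrt{n}$ as $m\to\infty$: the pairwise estimates do not glue, because each one spends the same class $\bar x$ anew. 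The inequality survives in this example only because of the roughly $p^{k+2}(p-1)$ elements of order $p^3$ \emph{outside} $\langle x\rangle$, a resource your scheme never credits; any repair must harvest surplus from all large-order classes, not just from $\langle x\rangle$.

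This is also where your route genuinely diverges from the paper. The paper never isolates $\langle x\rangle$: it proves the lemma for all $G$ at once by double counting, observing that $\psi(G)=|E(G)|$ for $E(G)=\{(a,b)\in G\times G \mid b\in\langle a\rangle\}$, and then reducing the claim to the single cardinality inequality $\psi(G)^2\geq |G|^2\, meo(G)$, exhibited as an injection of $G\times G\times\langle a\rangle$ into $E(G)\times E(G)$. Comparing against the cyclic subgroup of \emph{every} element simultaneously is exactly the global surplus your local absorption misses. (Be aware that the paper's injection is itself only asserted, and the existence of such an injection is equivalent to the inequality being proved; so if you pursue this, the productive task is to write down that injection explicitly, or otherwise prove $\psi(G)^2\geq |G|^2\,meo(G)$ directly, rather than to patch the class-peeling argument. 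Your cyclic case then remains a nice self-contained illustration, but it cannot serve as the engine of the general proof.)
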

\begin{proof}
We put $\psi(G)=\sum_{g \in G} o(g)$ and $$E(G)= \{(a, b) \in G\times G \mid \text{there~ exists~ } i~\text{~in~} \mathbb{N} \text{~such ~that~} b=a^i  \text{~and~} 1\leq i \leq |a| \}.$$
Then it is easy to see that $$\psi(G)=\mid E(G) \mid= |\{(a, b)\in G\times G \mid  ~~b~ \in ~  <a> \} \mid.$$
(without considering $\psi(G)$ as the cardinality of the set, it seems to be very hard to prove this lemma).
To prove, it is enough to show that $\psi(G)^2 \geq |G|^2 o(a)$, where $meo(G)=o(a)$.     
That is, we show that $$|E(G)\times E(G)| \geq |G\times G\times <a>|.$$ In fact, we should show that the size of the set $$A=\{((x, x^i), (y, y^j)) \mid   x, y \text{~~belong ~~to~~} G   \text{~and~}~~~ 1\leq i \leq |x|,~~~1\leq j \leq |y|\}$$ is greater than the size of the set 
$$B=\{(z, w, a^t, 1) | ~~z, w \text{~~belong ~to}~  G ~~\text{and}~~ 1\leq t \leq |a| \}.$$ 

But it is not hard to find a injective function like $f$ from the set $B$ to the set  $A$ and the result follows.  
\end{proof}
 
Now we are ready to prove the main result.\\

$\mathbf{Proof~~ of ~~Theorem ~~1.3}.$

Let $G$ be a finite  group and $A$ a $CC$-subset of $G$.  By considering the relation $\frak{R_1}$ on $G$, we
can see that there exists $t\geq 1$ such that $A=\bar x_1 \cup \bar x_2 \cup \dots \cup \bar x_{t}$. We prove it, by induction on $t$.  According to Lemma \ref{l3}, the theorem is true for $t=1$.   Now without loss of generality we can assume that 
$o(x_t)\leq o(x_i)$ for 
$1\leq i \leq t$. Thus, by Lemma \ref{l4}., we have $o(A)\leq o(\bar x_1 \cup \bar x_2 \cup \dots \cup \bar x_{t-1})$ and so the result is followed by induction  hypothesis. 

\begin{remark}
By considering the conjugacy relation  on $G$, we obtain that $$\psi(G)=\sum_{i=1}^{k(G)} \psi((\bar x_i))= \sum_{i=1}^{k(G)}  |G|o(x_i)/|C_G(x_i)|$$ and so $$o(G)=\sum_{i=1}^{k(G)}o(x_i)/|C_G(x_i)|.$$ From this we can follow that:  
For every  finite  group $G$ we have $o(G)\leq k(G)$ {\rm (see also Corollary 2.10 of \cite{zapir})}.
\end{remark}

\begin{cor}\label{c3}
For every  finite  group $G$ we have $k(G)\geq meo(G)^{1/2}$.
\end{cor}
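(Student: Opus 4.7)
The plan is to observe that the corollary follows essentially immediately by chaining two results already in hand, so no new combinatorial work is required. Specifically, the inequality $k(G)\geq meo(G)^{1/2}$ will be split into $k(G)\geq o(G)$ and $o(G)\geq meo(G)^{1/2}$, each of which is supplied by one of the earlier results in this section.

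First, I would invoke the Remark just above the corollary. There, starting from the identity $\psi(G)=\sum_{i=1}^{k(G)}|G|o(x_i)/|C_G(x_i)|$ obtained by partitioning $G$ into conjugacy classes, one divides by $|G|$ to get the formula $o(G)=\sum_{i=1}^{k(G)}o(x_i)/|C_G(x_i)|$. Since each term $o(x_i)/|C_G(x_i)|$ is at most $1$ (because $o(x_i)$ divides $|C_G(x_i)|$ by Lagrange's theorem applied to $\langle x_i\rangle\leq C_G(x_i)$), the sum of $k(G)$ such terms is bounded above by $k(G)$. Thus $o(G)\leq k(G)$.

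Next, I would apply Lemma \ref{l3}, the ``In particular'' clause, which states directly that $o(G)\geq meo(G)^{1/2}$. Combining the two bounds yields
\[
k(G)\;\geq\; o(G)\;\geq\; meo(G)^{1/2},
\]
which is the desired conclusion.

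There is no real obstacle here: the work has been done in Lemma \ref{l3} (the counting argument via $|E(G)\times E(G)|\geq |G\times G\times\langle a\rangle|$) and in the Remark (the class-equation style computation). The only tiny point one should double-check is the inequality $o(x_i)/|C_G(x_i)|\leq 1$, which, as noted, is just $|\langle x_i\rangle|$ divides $|C_G(x_i)|$. So the corollary is a one-line consequence of the two preceding statements.
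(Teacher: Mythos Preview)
Your proposal is correct and follows exactly the intended route: the corollary is an immediate consequence of combining the inequality $o(G)\leq k(G)$ from the preceding Remark with the bound $o(G)\geq meo(G)^{1/2}$ from Lemma~\ref{l3}. The paper gives no separate proof for this corollary precisely because it is this one-line chaining, and your justification of $o(x_i)/|C_G(x_i)|\leq 1$ via $\langle x_i\rangle\leq C_G(x_i)$ is the right detail to supply.
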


Finally,  we show that for some special subgroups of $G$, the Zapirain's Question will be improved.  
\begin{lem}\label{l5}
Let $G$ be a finite  group and $N$ a  subgroup of $G$. Then $$o(N\cap Z(G)) \leq o(G).$$
\end{lem}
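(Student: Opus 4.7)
My plan is to reduce the claim to a coset-wise comparison, then to a comparison of level-set sizes via Euler's identity, and finally to a fiber computation in abelian $p$-groups. Set $H = N \cap Z(G)$; since $H \leq Z(G)$, the subgroup $H$ is central and $G$ partitions into cosets $gH$. The target $o(G) \geq o(H)$ is equivalent to $\psi(G) \geq [G:H]\,\psi(H)$, and since $\psi(G) = \sum_{gH \in G/H} \psi(gH)$ with $\psi(gH) := \sum_{h \in H} o(gh)$, it would suffice to establish $\psi(gH) \geq \psi(H)$ for every $g \in G$.

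Fix $g \in G$ and note that $K := \langle g, H\rangle$ is abelian because $H \leq Z(G)$. I would then rewrite $\psi(S) = \sum_{d \geq 1} \phi(d)\, N_d(S)$ using the identity $o(x) = \sum_{d \mid o(x)} \phi(d)$, where $N_d(S) := |\{x \in S : d \mid o(x)\}|$ and $\phi$ is Euler's function. Because $\phi(d) > 0$, the coset-wise bound follows once $N_d(gH) \geq N_d(H)$ is known for each $d \geq 1$. Using the primary decomposition $K = \prod_p K_p$ (available since $K$ is abelian), set $H_p := H \cap K_p$ and let $g_p$ denote the $p$-part of $g$, so that $(gh)_p = g_p h_p$. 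Because orders multiply across coprime $p$-parts, the condition $d \mid o(x)$ for $d = \prod_p p^{a_p}$ splits into $p^{a_p} \mid o(x_p)$ for each $p$, making $N_d$ factor as a product over primes. This reduces the problem to the prime-power statement
\[
|\{h \in H_p : p^a \mid o(g_p h)\}| \geq |\{h \in H_p : p^a \mid o(h)\}|, \qquad a \geq 1.
\]

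In the abelian $p$-group $K_p$, $p^a \mid o(y)$ is equivalent to $y^{p^{a-1}} \neq 1$. Thus the right-hand side equals $|H_p| - |H_p[p^{a-1}]|$ while the left-hand side equals $|H_p| - |S|$, where $S := \{h \in H_p : h^{p^{a-1}} = g_p^{-p^{a-1}}\}$. Being a fiber of the homomorphism $h \mapsto h^{p^{a-1}}$ on $H_p$, the set $S$ is either empty or a coset of $H_p[p^{a-1}]$, so $|S| \leq |H_p[p^{a-1}]|$ and the inequality follows.

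The step I expect to feel most like a trick is the passage from the additive comparison $\psi(gH) \geq \psi(H)$ to the collection of ``stochastic dominance'' statements $N_d(gH) \geq N_d(H)$ via Euler's identity. Once that is in place, primary decomposition in the abelian $K$ handles the interaction of distinct primes, and the remaining single-prime inequality is the clean fiber computation above.
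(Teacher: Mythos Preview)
Your proof is correct and shares the paper's top-level strategy: set $H=N\cap Z(G)$, decompose $\psi(G)=\sum_{gH}\psi(gH)$, and derive $o(G)\geq o(H)$ from the coset-wise bound $\psi(gH)\geq\psi(H)$. The paper, however, simply \emph{asserts} this coset inequality and moves on, whereas you supply a full justification via the Euler expansion $\psi(S)=\sum_d\phi(d)\,N_d(S)$, primary decomposition of the abelian group $\langle g,H\rangle$, and the fiber count $|S|\leq|H_p[p^{a-1}]|$. So the route is the same, but your argument is self-contained where the paper's is a sketch; the trade-off is that the paper's version is a three-line appeal to a known fact (implicitly Lemma~2.7 of Jaikin-Zapirain), while yours actually establishes that fact from scratch.
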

\begin{proof}
 It is easy to see that  $\psi(G)=\sum_{i=1}^{[G:N\cap Z(G)]} \psi(a_i(N\cap Z(G)))$. Now as 
 $$\psi(a_i(N\cap Z(G))\geq \psi(N\cap Z(G)),$$ we obtain that  $$\psi(G)\geq \sum_{i=1}^{[G:N\cap Z(G)]} \psi(N\cap Z(G))= [G:N\cap Z(G)] \psi(N\cap Z(G).$$
 Thus $o(G)\geq o(N\cap Z(G))$.
\end{proof}
\begin{cor}\label{c3}
For every  finite  group $G$ we have $o(G)\geq o(Z(G))$ {\rm (see also Lemma 2.7 of \cite{zapir})}.
\end{cor}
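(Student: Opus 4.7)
The strategy is the one outlined in the paper: partition $G$ into its $[G:H]$ cosets of $H := N \cap Z(G)$, so that $\psi(G) = \sum_{i=1}^{[G:H]} \psi(a_i H)$. If one can prove the coset-wise inequality $\psi(aH) \geq \psi(H)$ for every $a \in G$, then $\psi(G) \geq [G:H]\,\psi(H)$, and dividing through by $|G| = [G:H]\,|H|$ yields $o(G) \geq o(H)$. Thus the whole lemma collapses onto this single inequality, which is precisely the step the paper asserts without justification.

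To establish $\psi(aH) \geq \psi(H)$ I would first reduce to an abelian $p$-group. Since $H \leq Z(G)$, the subgroup $L := \langle a \rangle H$ is abelian and contains both $H$ and the coset $aH$. The primary decomposition $L = \bigoplus_p L_p$ restricts to $H = \bigoplus_p H_p$ and writes $a = \sum_p a_p$; because the order of an element of a finite abelian group factors as the product of the orders of its $p$-components, one obtains $\psi(aH) = \prod_p \psi(a_p H_p)$ and $\psi(H) = \prod_p \psi(H_p)$, reducing the task to the case in which $L$ is an abelian $p$-group. There I would apply a layer-cake comparison of the ``distribution functions'' of element orders. In an abelian $p$-group, $\{h \in H : o(h) \leq p^k\}$ is exactly the subgroup $H[p^k] = \{h : p^k h = 0\}$, while $\{g \in aH : o(g) \leq p^k\} = \{h \in H : p^k h = -p^k a\}$ is either empty or a single coset of $H[p^k]$ in $H$; in either case its cardinality is at most $|H[p^k]|$. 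Writing $o(g) = \sum_{m \geq 1} \mathbf{1}[o(g) \geq m]$ and summing this identity over $g \in aH$ versus over $h \in H$, the pointwise domination of cumulative counts yields $\psi(aH) \geq \psi(H)$.

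The main obstacle is exactly this coset inequality. The naive bijection $h \mapsto ah$ does \emph{not} suffice, since individual orders may drop: in $G = \mathbb{Z}/6\mathbb{Z}$, with $H$ its unique order-$3$ subgroup and $a$ a generator, the order-$3$ element of $H$ pairs with an element of $aH$ of order $2$. The reduction to abelian $p$-groups combined with the torsion-filtration comparison is what restores the inequality on \emph{average}. Once this is in hand, summing over cosets immediately delivers the lemma, and the Corollary $o(G) \geq o(Z(G))$ follows by taking $N = G$.
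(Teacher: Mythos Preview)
Your argument is correct and follows precisely the paper's route through Lemma~\ref{l5}: partition $G$ into cosets of $H=N\cap Z(G)$, use the coset-wise inequality $\psi(aH)\geq\psi(H)$, sum, and then specialize to $N=G$ for the corollary. The only difference is that the paper asserts $\psi(a_i(N\cap Z(G)))\geq\psi(N\cap Z(G))$ without justification, whereas you supply a valid proof of that step via reduction to abelian $p$-groups and a torsion-filtration (layer-cake) comparison.
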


\end{document}